\numberwithin{equation}{section}
\theoremstyle{plain}
\newtheorem{thm}{Theorem}[subsection]
\newtheorem{lem}[thm]{Lemma}
\newtheorem{prop}[thm]{Proposition}
\newtheorem*{thm*}{Theorem}
\newtheorem*{lem*}{Lemma}
\newtheorem*{prop*}{Proposition}
\newtheorem*{cor*}{Corollary}
\theoremstyle{definition}
\newtheorem*{defn*}{Definition}
\newtheorem{ex}[thm]{Example}
{}
\newtheorem{rem}[thm]{Remark}
\newtheorem*{rem*}{Remark}
\newtheorem*{war*}{Warning}
\newtheorem*{hyp_plain*}{Hypotheses}
\newtheorem{notation}[thm]{Notation}{}
{}
{}
{}
{}
\theoremstyle{remark}
{}
{}
{}
\def\to{\longrightarrow} %I hate short arrows
\def\PP{\mathbb{P}}
\def\ZZ{\mathbb{Z}}
\def\sfC{\mathsf{C}}
\def\sfD{\mathsf{D}}
\def\mfr{\mathfrak{r}}
\def\op{\mathrm{op}}
\DeclareMathOperator{\Hom}{Hom}
\DeclareMathOperator{\id}{id}
\DeclareMathOperator{\modu}{\mathsf{mod}}
\DeclareMathOperator{\Modu}{\mathsf{Mod}}
\DeclareMathOperator{\Gr}{\mathsf{Gr}}
\DeclareMathOperator{\gr}{\mathsf{gr}}
\DeclareMathOperator{\sGr}{\underline{\mathsf{Gr}}}
\DeclareMathOperator{\Inj}{\mathsf{Inj}}
\DeclareMathOperator{\Proj}{\mathsf{Proj}}
\DeclareMathOperator{\proj}{\mathsf{proj}}
\DeclareMathOperator{\RHom}{\mathbf{R}Hom}
\DeclareMathOperator{\Ext}{Ext}
\definecolor{internationalkleinblue}{rgb}{0.0, 0.18, 0.65}
\newcommand{\shortminus}{\scalebox{0.75}[1.0]{\( - \)}}
\theoremstyle{theorem}
\newtheorem{ThmIntro}{Theorem}
\theoremstyle{definition}
\title{Tilting in $Q$-shaped derived categories}
\author{Sira Gratz}
\address{Sira Gratz, Aarhus University, Department of Mathematics, Ny Munkegade 118, bldg. 1530
DK-8000 Aarhus C, Denmark
}
\email{sira@math.au.dk}
\author{Henrik Holm}
\address{Henrik Holm, Department of Mathematical Sciences, University of Copenhagen, Universitetsparken 5,
2100 Copenhagen \O, Denmark
}
\email{holm@math.ku.dk}
\author{Peter J{\o}rgensen}
\address{Peter J{\o}rgensen, Aarhus University, Department of Mathematics, Ny Munkegade 118, bldg. 1530
DK-8000 Aarhus C, Denmark
}
\email{peter.jorgensen@math.au.dk}
\author{Greg Stevenson}
\address{Greg Stevenson, Aarhus University, Department of Mathematics, Ny Munkegade 118, bldg. 1530
DK-8000 Aarhus C, Denmark
}
\email{greg@math.au.dk}
\subjclass[2020]{16E35, 18E35, 18G80, 18N40}
\keywords{Derived category, exterior algebra, graded algebra, mesh category, preprojective algebra, self-injective algebra, tilting object}
\begin{document}
\begin{abstract}

\noindent 
The main result of this paper is that there is sometimes a triangulated equivalence between $\sfD_Q( A )$, the $Q$-shaped derived category of an algebra $A$, and $\sfD( B )$, the classic derived category of a different algebra $B$.  By construction, $\sfD_Q( A )$ consists of $Q$-shaped diagrams of $A$-modules for a suitable small category $Q$.  Our result concerns the case where $Q$ consists of shifts of indecomposable projective modules over a self-injective $\ZZ$-graded algebra $\Lambda$.

\medskip
\noindent
A notable special case is the result by Iyama, Kato, and Miyachi that $\sfD_N( A )$, the $N$-derived category of $A$, is triangulated equivalent to $\sfD\big( T_{ N-1 }( A ) \big)$, the classic derived category of $T_{ N-1 }( A )$, which denotes upper diagonal $( N-1 ) \times ( N-1 )$-matrices over $A$.  Several other special cases will also be discussed.

\end{abstract}
%
%\thanks{}

\maketitle

%\setcounter{tocdepth}{1}
%\tableofcontents

%--------------------------------------------------------------------------------------------------------------------------------------------------

%--------------------------------------------------------------------------------------------------------------------------------------------------

%--------------------------------------------------------------------------------------------------------------------------------------------------

%--------------------------------------------------------------------------------------------------------------------------------------------------

\section{Introduction}

The main result of this paper, Theorem \ref{thm:A} below, is that there is sometimes a triangulated equivalence 
\[
  \sfD_Q( A ) \cong \sfD( B ).
\]
On the left hand side, $\sfD_Q( A )$ is the $Q$-shaped derived category of an algebra $A$ which by construction consists of $Q$-shaped diagrams of $A$-modules for a suitable small category $Q$.  On the right hand side, $\sfD( B )$ is the classic derived category of a different algebra $B$.  Theorem \ref{thm:A} concerns the case where $Q$ consists of shifts of indecomposable projective modules over a self-injective $\ZZ$-graded algebra $\Lambda$.

A notable special case is the following triangulated equivalence, originally due to \cite[prop.\ 4.11]{IKM} with a precursor appearing in \cite[thm.\ 3.1]{Kajiura-Saito-Takahashi}.
\begin{equation}
\label{equ:N-derived}
  \sfD_N( A ) \cong \sfD\big( T_{ N-1 }( A ) \big)
\end{equation}
On the left hand side, $\sfD_N( A )$ is the derived category of $N$-complexes of $A$-modules in which the composition of any $N$ consecutive differentials is zero; see \cite[def.\ 3.6]{IKM}.  On the right hand side, $T_{ N-1 }( A )$ is upper diagonal $( N-1 ) \times ( N-1 )$-matrices over $A$.  

The category $\sfD_Q( A )$ was defined and investigated in \cite{HJ-jlms} and \cite{HJ-tams} building on ideas by Iyama and Minamoto \cite[sec.\ 2]{IM}; see \cite{HJ-intro} for a brief introduction.  
\begin{figure}
%\begin{center}
\begin{tikzpicture}[scale=1]
  \node at (-3,0){$\cdots$};
  \draw[->] (-2.71,0) to (-2.2,0);
  \node at (-2,0){$2$};
  \draw[->] (-1.8,0) to (-1.2,0);  
  \node at (-1,0){$1$};
  \draw[->] (-0.8,0) to (-0.2,0);    
  \node at (0,0){$0$};
  \draw[->] (0.2,0) to (0.65,0);    
  \node at (1,0){$-1$};
  \draw[->] (1.30,0) to (1.67,0);    
  \node at (2,0){$-2$};    
  \draw[->] (2.32,0) to (2.66,0);    
  \node at (3,0){$\cdots$};    
\end{tikzpicture}
%\end{center}
\caption{The category underlying chain complexes and $N$-complexes is given by this diagram with suitable relations.}
\label{fig:linear_category}
\end{figure}
If $Q$ is given by Figure \ref{fig:linear_category} with the relations that any two consecutive arrows compose to zero, then $\sfD_Q( A )$ is {\em equal} to the classic derived category $\sfD( A )$.  On the other hand, if $Q$ is given by a single vertex and a loop squaring to zero, then $\sfD_Q( A )$ is the derived category of differential $A$-modules, which is very far from being equivalent to a classic derived category.  This paper concerns an intermediate situation which we now describe.  The following setup is due to Yamaura \cite[sec.\ 3.1]{Yamaura}.
\begin{itemize}
\setlength\itemsep{4pt}

\item  $k$ is an algebraically closed field.

\item  $\Lambda$ is a finite dimensional self-injective $k$-algebra which is $\ZZ$-graded, concentrated in non-negative degrees, and has $\Lambda_0$ of finite global dimension.

\item  $T$ is the graded right module $\bigoplus_{i=0}^{\ell-1} \Lambda(i)_{\leq 0}$ over $\Lambda$ where $\ell$ is the maximal degree in which $\Lambda$ is non-zero.

\item  $P_1$, $\ldots$, $P_n$ are representatives up to isomorphism of the indecomposable projective right modules over $\Lambda$.  The $P_i$ are canonically $\ZZ$-graded.

\item  $Q$ is the category $\{P_i(j) \mid 1\leq i \leq n \text{ and } j\in \ZZ\}$ where $(j)$ denotes the $j$'th graded shift.

\end{itemize}
Under this setup, $Q$ is a category for which the $Q$-shaped derived category makes sense, see Remark \ref{rem:Q_works}, and we prove the following.

\begin{ThmIntro}
[=Theorem \ref{thm:tilting}]
\label{thm:A}
Setting $\Gamma = \Hom_{\sGr \Lambda}(T,T)$, 
where $\sGr \Lambda$ is the stable category of $\ZZ$-graded right modules over $\Lambda$, we have the triangulated equivalence
\[
\sfD_Q(A) \cong \sfD(\Gamma\otimes_k A).
\]
\end{ThmIntro}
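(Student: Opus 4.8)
The plan is to realize the equivalence as an instance of the Morita theorem for algebraic triangulated categories (Keller--Schwede--Shipley style): I would produce a single object $G\in\sfD_Q(A)$ which is compact, which generates $\sfD_Q(A)$, which satisfies $\Hom_{\sfD_Q(A)}(G,G[i])=0$ for all $i\neq0$, and whose endomorphism ring is $\End_{\sfD_Q(A)}(G)\cong\Gamma\otimes_k A$. Since $\sfD_Q(A)$ carries a Frobenius/dg enhancement coming from its construction in \cite{HJ-jlms}, such a $G$ is a tilting object, and the derived Morita theorem then delivers $\sfD_Q(A)\cong\sfD(\Gamma\otimes_k A)$. The whole difficulty is thereby concentrated in constructing $G$ and computing its graded endomorphism ring, and the strategy is to reduce both to the base case $A=k$, where the input is Yamaura's tilting theorem \cite{Yamaura}.

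For the case $A=k$ I would first record the identification $\sfD_Q(k)\cong\sGr \Lambda$: by construction the $Q$-shaped diagrams of $k$-vector spaces are exactly the $\ZZ$-graded $\Lambda$-modules, the objects of $Q$ being the indecomposable graded projectives and their shifts; and over a field the diagrams inverted in forming $\sfD_Q(k)$ are precisely the projective-injective graded modules, so the localization collapses onto the stable category $\sGr \Lambda$, with suspension corresponding to the stable cosyzygy. Under this identification the module $T=\bigoplus_{i=0}^{\ell-1}\Lambda(i)_{\leq0}$ becomes an object of $\sfD_Q(k)$; it is compact because it is finitely generated, and by Yamaura's theorem it is a tilting object, so that $\End_{\sGr \Lambda}(T)=\Gamma$, $\Hom_{\sGr \Lambda}(T,T[i])=0$ for $i\neq0$, and $T$ generates $\sGr \Lambda$.

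Now I would set $G:=T\otimes_k A$, the image of $T$ under the base-change functor $(-)\otimes_k A\colon\sfD_Q(k)\to\sfD_Q(A)$ obtained by applying $(-)\otimes_k A$ objectwise to diagrams. This functor is triangulated, preserves coproducts, and carries the representable (compact) generators of $\sfD_Q(k)$ to those of $\sfD_Q(A)$; hence $G$ is compact, and since $T$ generates $\sfD_Q(k)$ its image $G$ generates $\sfD_Q(A)$. The endomorphism computation is the heart of the matter. Writing $\res\colon\sfD_Q(A)\to\sfD_Q(k)$ for restriction of scalars, base change is left adjoint to $\res$, and since $A\cong k^{(I)}$ as a $k$-module one has $\res(X\otimes_k A)\cong\bigoplus_{I}X$. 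Combining the adjunction, this coproduct formula, and the compactness of $T$ gives
\[
\Hom_{\sfD_Q(A)}\big(G,G[i]\big)
\cong\Hom_{\sfD_Q(k)}\big(T,\res(G[i])\big)
\cong\Hom_{\sfD_Q(k)}\Big(T,\textstyle\bigoplus_{I}T[i]\Big)
\cong\Hom_{\sfD_Q(k)}\big(T,T[i]\big)\otimes_k A ,
\]
which vanishes for $i\neq0$ and equals $\Gamma\otimes_k A$ for $i=0$; a routine check that the adjunction unit $T\to\res(G)$ is compatible with composition identifies the ring structure with that of $\Gamma\otimes_k A$. Thus $G$ is a compact tilting object with the prescribed endomorphism ring, and the derived Morita theorem finishes the argument.

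The hardest part will be the foundational layer on which everything rests: checking that base change and restriction are well defined on the $Q$-shaped derived categories — that is, that they descend through the localizations defining $\sfD_Q(-)$ and remain adjoint and triangulated there — together with the clean identification $\sfD_Q(k)\cong\sGr \Lambda$ matching suspensions, which is exactly what licenses the appeal to \cite{Yamaura}. Once these structural facts are secured, the Hom-base-change formula and the invocation of the Morita theorem are essentially formal.
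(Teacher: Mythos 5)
Your overall strategy is the paper's: base-change Yamaura's tilting object $T$ along $i^*=-\otimes_k A$, check that $i^*T$ is a compact generator of $\sfD_Q(A)$, compute that its graded endomorphism ring is $\Gamma\otimes_k A$ concentrated in degree $0$, and invoke derived Morita theory. The one place where you genuinely diverge is the endomorphism computation. The paper chooses a degree-wise finitely presented complete projective--injective resolution $\widetilde{T}$ of $T$, observes that $i^*\widetilde{T}$ stays a complete projective resolution because $-\otimes_k A$ is exact, and identifies $\hom(i^*\widetilde{T},i^*T)\cong\hom(\widetilde{T},T)\otimes_k A$ via a hom--tensor base-change lemma for finitely presented modules. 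You instead run the adjunction $i^*\dashv i_*$ at the stable level, use $i_*i^*T\cong\bigoplus_I T$ for $A\cong k^{(I)}$, and finish with compactness of $T$ in $\sGr\Lambda$. Both are valid; your route avoids choosing a resolution and is more formal, at the price of having to verify that the adjunction and the identification $i_*(X[1])\cong(i_*X)[1]$ descend to the stable categories (they do, since $i_*$ is exact and preserves projective--injectives --- this is exactly the paper's Remark~\ref{rem:adjoint}), and of the ring-structure check you correctly flag. The paper's resolution argument has the advantage of exhibiting the dg endomorphism algebra explicitly, so that formation of $\RHom$ is visibly under control. Two small points to tighten: the generation claim needs the input from \cite{HJ-tams} that $\sfD_Q(A)$ is compactly generated by $\{i^*S_q\}$ (so that the image of $i^*$ generates --- this is the paper's Lemma~\ref{lem:bc2}); and your parenthetical ``representable (compact) generators'' is off, since the representables $Q(-,q)$ are projective and hence vanish stably --- the compact generators in play are the simple functors $S_q$.
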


\noindent
The theorem is established by finding a tilting object in $\sfD_Q( A )$ with endomorphism ring $\Gamma\otimes_k A$.  This builds on \cite[prop.\ 3.3]{Yamaura}, which shows that $T$ is a tilting object of $\sGr \Lambda$.  Jasso recently provided an alternative proof of the theorem, see \cite[cor.\ 3.28 and exa.\ 3.32]{Jasso}.

To obtain the equivalence \eqref{equ:N-derived} as a special case of Theorem \ref{thm:A}, set $\Lambda = k[ X ]/( X^N )$ with $X$ in degree $1$.  Then $Q$ is the category given by Figure \ref{fig:linear_category} with the relations that any $N$ consecutive arrows compose to zero, so $\sfD_Q( A )$ is $\sfD_N( A )$, the derived category of $N$-complexes.  Moreover, $\Gamma$ is $T_{ N-1 }( k )$ so $\Gamma \otimes_k A$ is $T_{ N-1 }( A )$ and Theorem \ref{thm:A} gives \eqref{equ:N-derived}.  Different choices of $\Lambda$ provide many other small categories $Q$, and several other special cases of Theorem \ref{thm:A} will be discussed in Section \ref{sec:Examples}.  Section \ref{sec:prelim} contains some preliminary material, and Section \ref{sec:Q} proves Theorem \ref{thm:A} (= Theorem \ref{thm:tilting}).

\medskip
\noindent
{\bf Acknowledgement.}
This work was supported by the EPSRC (grant EP/V038672/1), VILLUM FONDEN (grant VIL42076), a DNRF Chair from the Danish National Research Foundation (grant DNRF156), a Research Project 2 from the Independent Research Fund Denmark (grant 1026-00050B), and by Aarhus University Research Foundation (grant AUFF-F-2020-7-16).

\section{Preliminaries}\label{sec:prelim}

%--------------------------------------------------------------------------------------------------------------------------------------------------

\subsection{Graded rings and modules}

Let $R$ be a graded ring, by which we mean a $\ZZ$-graded ring. We denote by $\Modu R$ and $\Gr R$ the categories of right modules and graded right modules respectively. We use $\modu R$ and $\gr R$ to denote their respective subcategories of finitely presented objects and finitely presented projective objects in these categories are denoted by $\proj R$ and $\proj^\ZZ R$.

Given a graded $R$-module $M$ we can form the shifted module $M(i)$ with $M(i)_j = M_{i+j}$. The category $\Gr R$ is enriched in graded abelian groups via, for graded $R$-modules $M$ and $N$,
\[
\hom_{\Gr R}(M,N) = \bigoplus_{i\in \ZZ} \Hom_{\Gr R}(M, N(i)).
\]

In the special case that $R$ is non-negatively graded, i.e.\ $R = \bigoplus_{i\geq 0} R_i$ then given $M\in \Gr R$ the subgroup $M_{\geq n} = \bigoplus_{i\geq n} M_i$ is a graded submodule for any $n\in \ZZ$. This gives rise to truncation functors
\[
\xymatrix{
\Gr R \ar[rr]<0.75ex>^-{(-)_{\geq n}} \ar[rr]<-0.75ex>_-{(-)_{\leq n}} && \Gr R
}
\]
where $M_{\geq n} = \bigoplus_{i\geq n} M_i$ and $M_{\leq n} = M/M_{\geq n+1}$, which correspond to the right and left adjoints respectively
\[
\xymatrix{
\Gr_{\geq n} R \ar[rr]<0.75ex> \ar@{<-}[rr]<-0.75ex>_-{(-)_{\geq n}} && \Gr R
}
\text{ and }
\xymatrix{
\Gr_{\leq n} R \ar@{<-}[rr]<0.75ex>^-{(-)_{\leq n}} \ar[rr]<-0.75ex> && \Gr R
}
\]
for the inclusions of the full subcategories $\Gr_{\geq n} R$ and $\Gr_{\leq n} R$ consisting of those graded modules living in the obvious degrees (we allow ourselves here to use the same notation for these functors even though they have different targets). 

%--------------------------------------------------------------------------------------------------------------------------------------------------

\subsection{Our standing hypotheses and setup}\label{ssec:hyp}

Let us now fix the setting we will work in. Throughout we work over an algebraically closed field $k$. The assumption that $k$ is algebraically closed is only required to apply \cite{Yamaura}*{Theorem~3.3}, i.e.\ the other results are valid over any field $k$. We let $\Lambda$ be a finite dimensional graded $k$-algebra, $\{e_0,\ldots,e_n\}$ a complete set of primitive orthogonal idempotents for $\Lambda_0$, and we set
\[
\ell = \sup \Lambda = \max \{i \mid \Lambda_i\neq 0\}.
\]
We assume that $\Lambda$ is concentrated in non-negative degrees. %As we may, we assume that the underlying algebra of $\Lambda$ is basic.

We write the simple module corresponding to $e_i$ as $S_i$ and it has projective cover $P_i$. The simples and projectives are naturally graded modules and we view them as such. We note that the $S_i$ are concentrated in degree $0$ and, restricting the action of $\Lambda$ to $\Lambda_0$, are precisely the simple $\Lambda_0$-modules. Moreover, $P_i$ is the base change of the corresponding projective $\Lambda_0$-module to $\Lambda$.

Associated to $\Lambda$ is the \emph{companion category} $\sfC$ with set of objects $\ZZ$ and morphisms $\sfC(i,j) = \Lambda_{j-i}$ with composition induced by the multiplication on $\Lambda$. The idempotent completion of $\sfC$ is the full subcategory 
\[
Q = \{P_i(j) \; \mid \; 1\leq i \leq n \text{ and } j\in \ZZ\}
\]
of $\Gr \Lambda$ and both of these categories are Morita equivalent to $\proj^\ZZ \Lambda$ i.e.\
\[
\Gr \Lambda  = \Modu \sfC = \Modu Q.
\]
Let us denote by $\mfr$ the radical of $Q$ (in the sense of \cite{Street}), which is also the radical of $\Lambda$ interpreted through the above lens. We denote the $k$-dual of a vector space $V$ by $V^\ast$, in particular we denote the dual of $\Lambda$ by $\Lambda^\ast$.

\begin{prop}\label{prop:HJconditions1}
The category $Q$ has the following properties:
\begin{itemize}
\item[(1)] it is $k$-linear and all morphism spaces are finite dimensional;
\item[(2)] it is locally bounded, i.e.\ for each $q\in Q$ the functors $Q(q,\shortminus)$ and $Q(\shortminus,q)$ are non-zero on only finitely many objects;
\item[(3)] given $q\neq q'\in Q$ we have a vector space decomposition $Q(q,q) = k\cdot\id_q \oplus \mfr(q,q)$ and $Q(q',q)\circ Q(q,q') \subseteq \mfr(q,q)$;
\item[(4)] there is an $N$ such that $\mfr^N = 0$;
\item[(5)] if $\Lambda$ is self-injective then $Q$ has a Serre functor given by $S = (\shortminus)\otimes_\Lambda \Lambda^\ast$, i.e.\ there is a natural isomorphism $Q(\shortminus,?) \cong \Hom_k(Q(?,S(\shortminus)),k)$.
\end{itemize}
\end{prop}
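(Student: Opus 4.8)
The plan is to translate each assertion, via the identifications $\Gr\Lambda = \Modu Q$ and $Q\cong\proj^{\ZZ}\Lambda$ recorded above, into an elementary statement about the graded algebra $\Lambda$. Writing $P_i = e_i\Lambda$, the basic dictionary is
\[
Q(P_i(a),P_{i'}(b)) \;\cong\; \Hom_{\Gr\Lambda}(e_i\Lambda, e_{i'}\Lambda(b-a)) \;\cong\; e_{i'}\Lambda_{b-a}e_i,
\]
where a degree-zero graded map $e_i\Lambda\to M$ is sent to its value at $e_i$; under this dictionary composition in $Q$ becomes multiplication in $\Lambda$, and $\mfr$ corresponds to the graded Jacobson radical $\rad\Lambda = \rad\Lambda_0\oplus\Lambda_{\geq 1}$. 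Properties (1)--(4) then follow formally, and (5) is the substantial point.

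\textbf{Properties (1)--(4).} $k$-linearity is inherited from $\Gr\Lambda$, and each space $e_{i'}\Lambda_{b-a}e_i$ is finite dimensional because $\Lambda$ is, giving (1). Since $\Lambda$ lives only in degrees $0,\ldots,\ell$, the space $e_{i'}\Lambda_{b-a}e_i$ vanishes unless $0\leq b-a\leq\ell$; with finitely many $i'$ available in each such degree this yields (2). For (3), fix $q=P_i(a)$; then $Q(q,q)\cong e_i\Lambda_0 e_i$ is the endomorphism ring of the indecomposable projective $\Lambda_0$-module $e_i\Lambda_0$, hence local, and since $k$ is algebraically closed its residue field is $k$, giving $Q(q,q)=k\cdot\id_q\oplus\mfr(q,q)$. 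If $q\neq q'$ then $P_i(a)\not\cong P_{i'}(b)$ (distinct objects of $Q$ being non-isomorphic), so no composite $q\to q'\to q$ can be invertible, else $q$ would be a summand of the indecomposable $q'$ and thus isomorphic to it; being a non-unit of the local ring $Q(q,q)$ it lies in $\mfr(q,q)$. Finally (4) holds because $\mfr^N$ corresponds to $(\rad\Lambda)^N$, and the Jacobson radical of the finite dimensional algebra $\Lambda$ is nilpotent.

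\textbf{Property (5): construction of $S$ and the duality.} Here the work lies. I would first check that $S=(\shortminus)\otimes_\Lambda\Lambda^{\ast}$ is an autoequivalence of $Q$: it is the graded Nakayama functor, which always sends graded projectives to graded injectives, and self-injectivity of $\Lambda$ forces these two classes to coincide, so $S$ maps $Q$ to $Q$; concretely $S(P_i)=e_i\Lambda^{\ast}\cong P_{\pi(i)}(\shortminus c_i)$ for a permutation $\pi$ and integers $c_i$, whence $S$ is invertible. For the isomorphism itself, the objects of $Q$ are exactly the $P_i(a)$, so I would take $X=P_i(a)$ and $Y=P_{i'}(b)$ and compute both sides directly. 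The dictionary gives $Q(X,Y)\cong e_{i'}\Lambda_{b-a}e_i$, while $SX=e_i\Lambda^{\ast}(a)$ together with $(\Lambda^{\ast})_{a-b}=(\Lambda_{b-a})^{\ast}$ gives
\[
Q(Y,SX)\;\cong\; e_i(\Lambda_{b-a})^{\ast}e_{i'}\;\cong\;\Hom_k(e_{i'}\Lambda_{b-a}e_i,k),
\]
the last step being the evaluation pairing. Dualizing, $Q(X,Y)\cong\Hom_k(Q(Y,SX),k)$, which is the desired formula $Q(\shortminus,?)\cong\Hom_k(Q(?,S(\shortminus)),k)$.

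\textbf{The main obstacle.} The one genuinely delicate point is naturality. One must verify that the pointwise evaluation pairings assemble into a natural transformation of bifunctors, i.e.\ that the pairing $e_{i'}\Lambda_{b-a}e_i\times e_i(\Lambda_{b-a})^{\ast}e_{i'}\to k$, $(x,\phi)\mapsto\phi(x)$, is compatible with pre- and post-composition by arbitrary morphisms of $Q$. Under the dictionary this compatibility is precisely the identity $(\lambda\phi\mu)(x)=\phi(\mu x\lambda)$ defining the bimodule structure on $\Lambda^{\ast}$, so the pairing is a perfect pairing of bimodules and the transformation is natural and bijective on every object. It is worth noting that this duality isomorphism holds for any finite dimensional graded $\Lambda$; self-injectivity is needed only to guarantee that $S$ is an autoequivalence, and hence a genuine Serre functor.
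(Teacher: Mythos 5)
Your proof is correct and follows essentially the same route as the paper: translate everything through the dictionary $Q(P_i(a),P_{i'}(b))\cong e_{i'}\Lambda_{b-a}e_i$ so that (1)--(4) reduce to finite-dimensionality, boundedness of the grading, and nilpotence of the Jacobson radical, and obtain (5) from the graded Nakayama functor together with the identification $(\Lambda^\ast)_i=(\Lambda_{-i})^\ast$. You merely spell out details the paper declares evident (including the correct observation that the splitting $Q(q,q)=k\cdot\id_q\oplus\mfr(q,q)$ uses that the residue fields are $k$) and make the naturality of the evaluation pairing explicit, which the paper leaves to additivity.
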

\begin{proof}
Viewing $Q$ as the idempotent completion of $\sfC$ the first four statements are evident. After all, $\Lambda$ is a finite dimensional algebra over $k$ and so $\Lambda_i = 0$ for $\vert i \vert \gg 0$ which gives (1) and (2), as well as (4) since $\mfr$ is identified with the Jacobson radical of $\Lambda$ (we note that the Jacobson radical is automatically gradable in our setting, see for instance \cite{GG}*{Corollaries~4.4 and 4.5}). Again, since $\Lambda$ is non-negatively graded, for $q\neq q'$ the vector space $Q(q,q')$ consists of homogeneous elements of positive degree and so is certainly contained in the radical and so (3) follows too.

Now let us suppose that $\Lambda$ is self-injective. Then $S$ is a well-defined endofunctor of $Q$ and is an equivalence. In fact, $S$ is an autoequivalence on all of $\proj^\ZZ \Lambda$. In this latter category we have
\[
\hom(\Lambda, S(\Lambda(i))) = \hom(\Lambda, \Lambda^\ast(i)) \cong \Lambda^\ast(i)
\]
and taking degree $0$ components gives
\[
\Hom_{\Gr \Lambda}(\Lambda, S(\Lambda(i))) = (\Lambda^\ast)_i = (\Lambda_{-i})^\ast = \Hom_{\Gr \Lambda}(\Lambda(i), \Lambda)^\ast.
\]
This natural isomorphism then extends to arbitrary finitely generated projectives by additivity.
\end{proof}

\begin{rem}
\label{rem:Q_works}
The point is that, provided $\Lambda$ is self-injective, $Q$ satisfies \cite{HJ-intro}*{Setup~1.1} and so we are in a situation where we can consider the $Q$-shaped derived category.
\end{rem}

%--------------------------------------------------------------------------------------------------------------------------------------------------

\subsection{Change of base}

We continue with the setup as above, so $\Lambda$ is a non-negatively graded $k$-algebra and we associate to it categories $\sfC$ and $Q$.

Let $A$ be a $k$-algebra. Then we can consider the graded ring $\Lambda \otimes_k A$, where the grading is induced by the one from $\Lambda$. Given a small $k$-linear category $\sfD$ we let $\Modu \sfD$ denote the category of right $\sfD$-modules
\[
\Modu \sfD = [\sfD^\op, \Modu k]_k
\]
i.e.\ the category of $k$-linear functors from $\sfD^\op$ to $\Modu k$. We can form the base change of $\sfD$ to $A$ denoted $\sfD\otimes_k A$ which is the category with the same objects as $\sfD$ and for $d,d'\in \sfD$
\[
(\sfD\otimes_k A)(d,d') = \sfD(d,d')\otimes_k A
\]
with the obvious composition induced by the composition of $\sfD$ and multiplication of $A$. As usual, we will drop the $k$ from the subscript when it is clear (and we feel like it). We have an obvious equivalence
\[
\Modu (\sfD\otimes A) \cong [\sfD^\op, \Modu A]_k
\]
between modules over the base change of $\sfD$ to $A$ and $k$-linear functors from $\sfD^\op$ to $\Modu A$ (this is just the hom-tensor adjunction).

\begin{rem}
The category $[\sfD^\op, \Modu A]_k$ is denoted by ${}_{\sfD^{\op},A}\Modu$ in \cite{HJ-intro}.
\end{rem}

\begin{lem}
Let $A$ be a $k$-algebra. There are equivalences of categories
\[
\Modu (Q\otimes A) \cong \Gr(\Lambda \otimes A) \cong \Modu (\sfC\otimes A).
\]
\end{lem}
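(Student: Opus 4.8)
The plan is to deduce all three equivalences from the case $A = k$, i.e.\ from the already-recorded identifications $\Gr \Lambda = \Modu \sfC = \Modu Q$, by upgrading them to arbitrary $A$-coefficients. The only tool needed is the identification $\Modu(\sfD \otimes A) \cong [\sfD^\op, \Modu A]_k$ established just above, applied to $\sfD = \sfC$ and $\sfD = Q$. Thus it suffices to produce two equivalences
\[
\Gr(\Lambda \otimes A) \cong [\sfC^\op, \Modu A]_k \cong [Q^\op, \Modu A]_k,
\]
and I would treat them in turn.

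For the first equivalence I would simply unwind both sides. A $k$-linear functor $F \colon \sfC^\op \to \Modu A$ is the datum of an $A$-module $F(i)$ for each $i \in \ZZ$ together with $A$-linear maps realising the action of the morphism spaces $\sfC(i,j) = \Lambda_{j-i}$. Setting $M = \bigoplus_{i \in \ZZ} F(i)$, these maps assemble precisely into a graded $\Lambda$-action that is $A$-linear, that is, into the structure of a graded $(\Lambda \otimes A)$-module; conversely $M \in \Gr(\Lambda \otimes A)$ yields the functor $i \mapsto M_i$. This is exactly the construction giving $\Gr \Lambda = \Modu \sfC$ when $A = k$, now carried out while retaining the $A$-module structure throughout, so the compatibilities to be checked are those of the $A = k$ case tensored with $A$ and require no new verification.

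For the second equivalence I would invoke that $Q$ is the idempotent completion of $\sfC$. The inclusion $\sfC \hookrightarrow Q$ is fully faithful, and every object $P_i(j)$ of $Q$ is a direct summand of the object $\Lambda(j)$ of $\sfC$, split by an idempotent lying in $\sfC(j,j) = \Lambda_0$. Since $\Modu A$ is idempotent complete, restriction along this inclusion is an equivalence $[Q^\op, \Modu A]_k \xrightarrow{\sim} [\sfC^\op, \Modu A]_k$, because any functor on $\sfC^\op$ extends, uniquely up to isomorphism, to $Q^\op$ by splitting the images of these idempotents. Combining this with the first equivalence and the identifications $\Modu(\sfC \otimes A) \cong [\sfC^\op, \Modu A]_k$ and $\Modu(Q \otimes A) \cong [Q^\op, \Modu A]_k$ yields the claimed chain.

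I expect the one genuine subtlety, and hence the point to verify with care, to be that the idempotent-completion argument survives base change: one must confirm that $Q \otimes A$ and $\sfC \otimes A$ have the same module categories even though $Q \otimes A$ need not be the full Karoubi envelope of $\sfC \otimes A$. This is handled by the observation that the relevant idempotents are defined over $k$ — they sit in $\Lambda_0 \subseteq \Lambda_0 \otimes A$ — so that every object of $Q \otimes A$ remains a retract of an object of $\sfC \otimes A$; the retract-closure property, rather than the full Karoubi envelope, is all the restriction equivalence needs. Everything else is the routine bookkeeping matching graded multiplication against functoriality.
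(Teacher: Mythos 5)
Your proof is correct, and it reaches the same Morita-theoretic conclusion as the paper but decomposes the argument differently. The paper starts from $\Gr(\Lambda\otimes A)$: it observes that $\proj^\ZZ(\Lambda\otimes A)$ is a set of finitely presented projective generators, passes to the Morita-equivalent full subcategory of rank-one free modules $\Lambda\otimes A(i)$, and identifies that subcategory with $\sfC\otimes A$ by the explicit computation $\Hom_{\Gr(\Lambda\otimes A)}(\Lambda\otimes A(i),\Lambda\otimes A(j))\cong(\Lambda\otimes A)_{j-i}=\Lambda_{j-i}\otimes A$; the equivalence with $\Modu(Q\otimes A)$ is then dispatched in one line by saying that the Morita equivalence of $\sfC$ and $Q$ passes to $\sfC\otimes A$ and $Q\otimes A$. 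You instead route everything through the functor-category description $\Modu(\sfD\otimes A)\cong[\sfD^\op,\Modu A]_k$, unwind $[\sfC^\op,\Modu A]_k\cong\Gr(\Lambda\otimes A)$ by hand (which is really the Yoneda-flavoured mirror image of the paper's Hom computation), and prove the $\sfC$-versus-$Q$ step via the universal property of idempotent completion against the idempotent-complete target $\Modu A$. The main thing your version buys is that the last step is actually justified: your observation that the splitting idempotents live in $\Lambda_0\subseteq\Lambda_0\otimes A$, so that retract-closure rather than the full Karoubi envelope of $\sfC\otimes A$ is what the restriction equivalence needs, makes explicit the point the paper leaves implicit in ``so are $Q\otimes A$ and $\sfC\otimes A$.'' The paper's version is slightly more economical in that the single Hom computation over $\Lambda\otimes A$ does the work of your first unwinding step; the two are otherwise interchangeable.
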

\begin{proof}
The category $\Gr(\Lambda \otimes A)$ has $\proj^\ZZ (\Lambda\otimes A)$ as a set of finitely presented projective generators and so
\[
\Gr(\Lambda \otimes A) \cong \Modu(\proj^\ZZ (\Lambda\otimes A)).
\]
Now $\proj^\ZZ (\Lambda\otimes A)$ is Morita equivalent to the full subcategory consisting of the rank $1$ free modules $\Lambda\otimes A(i)$ for $i\in \ZZ$ (the former being the additive closure of the latter). We have
\[
\Hom_{\Gr(\Lambda \otimes A)}(\Lambda\otimes A(i), \Lambda\otimes A(j)) \cong (\Lambda\otimes A)_{j-i} = \Lambda_{j-i}\otimes A = (\sfC\otimes A)(i,j)
\]
i.e.\ the category of rank $1$ free modules is nothing but $\sfC \otimes A$, which gives the second equivalence in the statement. Since $\sfC$ and $Q$ are Morita equivalent so are $Q\otimes A$ and $\sfC\otimes A$ and so we are done.
\end{proof}

\begin{notation}\label{notation:adjoints}
The structure map $k\to A$ induces a functor $Q = Q\otimes k \stackrel{i}\to Q\otimes A$. This gives rise to the base change functor
\[
i^*\colon \Modu Q \to \Modu Q\otimes A
\]
which is just, up to the canonical equivalences, the usual base change functor induced by the map $\Lambda \to \Lambda \otimes A$ of graded rings. As usual $i^*$ has adjoints $i^* \dashv i_* \dashv i^!$ which automatically, by the above, are similarly compatible with the equivalence to graded modules. We emphasize that, since we are working over a field $k$, the base change functor $i^*$ is exact\textemdash{}indeed, $\Lambda\otimes A$ is free over $\Lambda$.
\end{notation}

We understand completely the maps between objects of $\Gr (\Lambda\otimes A)$ coming from $\gr \Lambda$.

\begin{lem}\label{lem:bc1}
Let $M$ be a finitely presented graded $\Lambda$-module. There is a natural isomorphism $\hom_{\Lambda\otimes A}(i^*(M),i^*(-)) \cong \hom_\Lambda(M,-)\otimes A$. In particular, taking the $0$th degree pieces gives the analogous statement for the usual morphisms.
\end{lem}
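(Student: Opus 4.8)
The plan is to exhibit an explicit natural comparison map and then show it is an isomorphism by reducing, via the given finite presentation of $M$, to the case of free modules, where the statement is a direct Yoneda computation. Throughout I use the identification $i^*(-)\cong(-)\otimes_k A$ and the exactness of $i^*$ from Notation~\ref{notation:adjoints}.

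First I would write down the natural transformation. For a graded $\Lambda$-linear map $f\colon M\to N$ and an element $a\in A$, the assignment $m\otimes b\mapsto f(m)\otimes ab$ defines a graded $(\Lambda\otimes A)$-linear map $i^*(M)\to i^*(N)$. This is $k$-bilinear in $(f,a)$, so it yields a map
\[
\theta_N\colon \hom_\Lambda(M,N)\otimes_k A \longrightarrow \hom_{\Lambda\otimes A}(i^*(M),i^*(N)),
\]
natural in $N$ and contravariantly natural in $M$. It then suffices to prove that $\theta_N$ is an isomorphism for all $N$. Next I would check this for $M$ a finitely generated graded free module. For $M=\Lambda(j)$ the identification $\hom_{\Gr S}(S(j),X)\cong X(-j)$, natural in $X$ and valid over any graded ring $S$, applied to $S=\Lambda\otimes A$ and $X=i^*(N)\cong N\otimes_k A$, rewrites the target as $(N\otimes_k A)(-j)$; the source is $N(-j)\otimes_k A$, and these agree since shifting commutes with $-\otimes_k A$ (as $A$ sits in degree $0$). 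One checks $\theta_N$ realises this identification. Taking \emph{finite} direct sums handles an arbitrary finitely generated graded free module.

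The third step is the dévissage. Choose a presentation $F_1\to F_0\to M\to 0$ with $F_0,F_1$ finitely generated graded free. Since $i^*$ is exact, $i^*(F_1)\to i^*(F_0)\to i^*(M)\to 0$ is exact, so $\hom_{\Lambda\otimes A}(-,i^*(N))$ converts it into a left exact sequence; on the other side $\hom_\Lambda(-,N)$ is left exact and $-\otimes_k A$ is exact because $k$ is a field, so $\hom_\Lambda(-,N)\otimes_k A$ also sends the presentation to a left exact sequence. The maps $\theta_N$ assemble into a morphism of left exact sequences
\[
\begin{CD}
0 @>>> \hom_\Lambda(M,N)\otimes_k A @>>> \hom_\Lambda(F_0,N)\otimes_k A @>>> \hom_\Lambda(F_1,N)\otimes_k A \\
@. @VV{\theta_N}V @VV{\cong}V @VV{\cong}V \\
0 @>>> \hom_{\Lambda\otimes A}(i^*(M),i^*(N)) @>>> \hom_{\Lambda\otimes A}(i^*(F_0),i^*(N)) @>>> \hom_{\Lambda\otimes A}(i^*(F_1),i^*(N))
\end{CD}
\]
whose two right-hand verticals are isomorphisms by the previous step. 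Comparing kernels (equivalently, the five lemma) forces $\theta_N$ to be an isomorphism, naturally in $N$. Finally, the resulting isomorphism is one of graded abelian groups, so the ``in particular'' follows by taking degree-$0$ components: the degree-$0$ part of $\hom$ is the honest graded $\Hom$, and $(\hom_\Lambda(M,N)\otimes_k A)_0=\Hom_\Lambda(M,N)\otimes_k A$ since $A$ lives in degree $0$.

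There is no serious obstacle here; the argument is routine dévissage and the real work is bookkeeping. The two points I would be careful to flag rather than belabour are exactly the inputs used above: finite generation of $F_0,F_1$ is essential, since $\hom$ out of an infinite direct sum is a product and need not commute with $-\otimes_k A$; and the exactness of $-\otimes_k A$, which is precisely where the hypothesis that $k$ is a field enters.
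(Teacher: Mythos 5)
Your argument is correct and is precisely the reduction the paper has in mind: its entire proof is the single sentence ``One reduces, as usual, to checking on finite rank free modules where the statement is clear.'' You have simply made explicit the comparison map, the free case, and the d\'evissage via a finite presentation, correctly flagging the two inputs that actually matter (finite generation of the presentation and exactness of $-\otimes_k A$ over the field $k$).
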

\begin{proof}
One reduces, as usual, to checking on finite rank free modules where the statement is clear. 
\end{proof}

%If necessary note that one also has the statement with $M$ in the other variable since $\Lambda$ is self-injective and the tensor functors involved are exact

%--------------------------------------------------------------------------------------------------------------------------------------------------

%--------------------------------------------------------------------------------------------------------------------------------------------------

\section{The $Q$-shaped derived category}
\label{sec:Q}

We refer to the survey \cite{HJ-intro} for an introduction to $Q$-shaped derived categories. In particular, we follow Section~2 of loc.\ cit.\ to give an interpretation of these categories which will be convenient for us.

We continue with our standard setup as in Section~\ref{ssec:hyp}, with the additional hypothesis that $\Lambda$ is self-injective. For $q\in Q$ we denote by $S_q$ (this would be $S\{q\}$ in the notation of \cite{HJ-intro}) the simple module concentrated at $q$. Noting that $q = P_i(j)$ for some $1\leq i \leq n$ and $j\in \ZZ$ one sees that under the equivalence $\Modu Q = \Gr \Lambda$ the simple $S_q$ is just $S_i(j)$. We are led to consider the class, for a $k$-algebra $A$,
\[
\mathscr{E} = \mathscr{E}_{A} = \{M\in \Modu (Q\otimes A) \mid \Ext_Q^1(S_q, M)=0 \; \forall \; q\in Q\}
\]
where we consider $M$ as a $Q$-module via the restriction functor $i_*$ induced by $i\colon Q\to (Q\otimes A)$ as in Notation~\ref{notation:adjoints}, and we drop the dependence on our fixed $k$-algebra $A$ from the notation as indicated. 

%--------------------------------------------------------------------------------------------------------------------------------------------------

\subsection{Base change and generators}

\begin{lem}
There are equalities
\begin{align*}
\mathscr{E} &= \{M\in \Modu (Q\otimes A) \mid i_*M \in \Proj^\ZZ \Lambda\} \\
&= \{M\in \Modu (Q\otimes A) \mid i_*M \in \Proj \Lambda\}
\end{align*}
\end{lem}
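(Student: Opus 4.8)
The plan is to notice that both descriptions on the right-hand side depend on $M$ only through $N := i_*M$, which lives in $\Modu Q = \Gr\Lambda$. Indeed, by construction $\Ext^1_Q(S_q,M)$ is computed in $\Modu Q$ with $M$ restricted along $i$, so $\Ext^1_Q(S_q,M) = \Ext^1_{\Gr\Lambda}(S_i(j), N)$, where $S_q = S_i(j)$. Hence the base change to $A$ plays no role, and since $M\in\mathscr{E}$ says exactly that $\Ext^1_{\Gr\Lambda}(S,N)=0$ for every graded simple $S$, the lemma is equivalent to the following assertion about an arbitrary graded module $N$ over the finite-dimensional self-injective graded algebra $\Lambda$: the conditions (i) $\Ext^1_{\Gr\Lambda}(S,N) = 0$ for every graded simple $S$, (ii) $N \in \Proj^\ZZ\Lambda$, and (iii) $N \in \Proj\Lambda$ are equivalent. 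First I would record this reduction, then prove the three-way equivalence for all $N$.

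For (ii) $\Leftrightarrow$ (iii) I would use the elementary homogenization argument. That (ii) implies (iii) is clear, since a graded direct summand of some $\bigoplus_\alpha \Lambda(j_\alpha)$ forgets to an ungraded summand of a free module. Conversely, given a graded surjection $\pi\colon F \twoheadrightarrow N$ with $F$ graded free, ungraded projectivity of $N$ yields an ungraded section $s$ of $\pi$; decomposing $s = \sum_d s_d$ into homogeneous components — each $s_d$ is again $\Lambda$-linear because $\pi$ and the actions are graded — and comparing degrees in $\pi s = \id_N$ shows that the degree-zero part $s_0$ is a graded section. Thus $N$ is a graded summand of $F$ and so lies in $\Proj^\ZZ\Lambda$. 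This works for arbitrary, possibly infinitely generated, $N$.

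The substantive step, which I expect to be the main obstacle, is (i) $\Leftrightarrow$ (ii), and I would route it through injectivity together with the self-injectivity of $\Lambda$. Since $\Lambda$ is finite dimensional, $\Gr\Lambda$ is a locally finite Grothendieck category whose simple objects are exactly the $S_i(j)$; in such a category an object is injective if and only if $\Ext^1$ against every simple vanishes (via a Baer-type criterion, testing injectivity on finitely generated, hence finite length, objects, followed by an induction along composition series). Therefore condition (i) says precisely that $N$ is injective in $\Gr\Lambda$. It then remains to identify injective with projective objects of $\Gr\Lambda$. For this I would invoke that $\Lambda$, being finite dimensional and self-injective, is quasi-Frobenius, so that projective and injective $\Lambda$-modules coincide for all modules, not merely finitely generated ones; combined with (ii) $\Leftrightarrow$ (iii) above and the analogous comparison ``injective in $\Gr\Lambda$ $\Leftrightarrow$ injective over $\Lambda$'' (valid because $\Lambda$ is graded-Noetherian), this yields injective in $\Gr\Lambda$ $\Leftrightarrow$ projective over $\Lambda$ $\Leftrightarrow$ projective in $\Gr\Lambda$, which is (i) $\Leftrightarrow$ (ii).

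The delicate points are all confined to this last paragraph. The characterization of injectivity by vanishing of $\Ext^1$ against simples must be applied in the locally finite category $\Gr\Lambda$ rather than naively, and the interchange of injective and projective must be justified for arbitrary, possibly infinitely generated, modules; here the full strength of the quasi-Frobenius property of $\Lambda$ (for instance, that arbitrary products of projectives stay projective) is what is needed, and the graded-to-ungraded comparison for injectives is where finite dimensionality, via graded-Noetherianity, is genuinely used.
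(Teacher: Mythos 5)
Your proof is correct, but it takes a genuinely different route from the paper's. The paper's proof is very short and outsources the main step: it invokes \cite{HJ-jlms}*{Theorem~7.1 and Definition~4.1} to identify $\mathscr{E}$ with the class of modules whose restriction has \emph{finite projective dimension} in $\Gr\Lambda$, then observes that over a self-injective algebra finite projective dimension forces projectivity (the last syzygy in a finite resolution is projective, hence injective, so the resolution splits), and finally cites \cite{NV}*{Corollary~3.3.7} for the graded/ungraded comparison of projectivity. You instead make the reduction to a statement about $N=i_*M$ explicit and then argue through \emph{injectivity}: the graded Baer criterion plus composition series (available because $\Lambda$ is finite dimensional, so cyclic graded modules have finite length and the graded simples are exactly the $S_i(j)$, using that $\Lambda_{\geq 1}$ is nilpotent) shows that vanishing of $\Ext^1$ against all graded simples is equivalent to graded injectivity, and then the quasi-Frobenius property of $\Lambda$ converts injective to projective. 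What your approach buys is independence from the $Q$-shaped machinery of \cite{HJ-jlms}; what it costs is reliance on three nontrivial standard facts that you correctly flag \textemdash{} the graded Baer criterion, the passage from graded-injective to ungraded-injective (which does need $\ZZ$-grading and graded Noetherianity, as in N\u{a}st\u{a}sescu--Van Oystaeyen), and the statement that over a QF ring \emph{arbitrary} injectives are projective and vice versa. A small simplification available to you: since $\Gr\Lambda$ is locally finite, every graded injective is a direct sum of shifts of the indecomposable injectives $(e_i\Lambda)^\ast$, which are projective by self-injectivity, so you could pass from graded-injective directly to graded-projective without detouring through the ungraded category; the ungraded statement then follows from your homogenization argument alone, which is fine as written.
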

\begin{proof}
By \cite{HJ-jlms} Theorem~7.1 and Definition~4.1 the class $\mathscr{E}$ consists of those modules whose projective dimension over $Q$ is finite. This is just asking for finite projective dimension as a graded $\Lambda$-module and, because $\Lambda$ is self-injective, this implies projectivity. One then notes that projectivity is independent of whether or not we consider the grading (cf.\ \cite{NV}*{Corollary~3.3.7}).
\end{proof}

We can consider the cotorsion pair that $\mathscr{E}$ generates and that it cogenerates. We denote the right and left $\Ext^1$-perpendiculars of $\mathscr{E}$ using $\perp_1$, e.g.\
\[
\mathscr{E}^{\perp_1} = \{N \in \Modu (Q\otimes A) \mid \Ext^1(M,N)=0 \; \forall \; M\in \mathscr{E}\}.
\]
By \cite{HJ-jlms}*{Theorem~6.5} both ${}^{\perp_1}\mathscr{E}$ and $\mathscr{E}^{\perp_1}$ are Frobenius categories. As in \cite{HJ-intro}*{Section~4} we can describe the $Q$-shaped derived category as
\[
\underline{{}^{\perp_1}\mathscr{E}} = \sfD_Q(A) = \underline{\mathscr{E}^{\perp_1}}
\]
where the underline denotes, as usual, the stable category. In particular, $\sfD_Q(A)$ is algebraic and the above provides a pair of differential graded enhancements.

\begin{rem}
If we take $A=k$ so $(Q\otimes A) = Q$ then we see that $\mathscr{E} = \Proj^\ZZ \Lambda = \Inj^\ZZ \Lambda$ is the full subcategory of projective, and equivalently injective, graded modules. Hence $\mathscr{E}^{\perp_1} = \Gr\Lambda = {}^{\perp_1}\mathscr{E}$. In particular, $\sfD_Q(k) = \sGr \Lambda$.
\end{rem}

\begin{lem}
For every $M\in \Modu Q$ we have $i^*M \in {}^{\perp_1}\mathscr{E}$. 
\end{lem}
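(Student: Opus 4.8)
The plan is to transfer the computation across the base-change adjunction $i^* \dashv i_*$ and then exploit the self-injectivity of $\Lambda$. Concretely, for $N \in \mathscr{E}$ I aim to establish a natural isomorphism
\[
\Ext^1_{Q\otimes A}(i^*M, N) \cong \Ext^1_Q(M, i_*N),
\]
and then to observe that the right-hand side vanishes because $i_*N$ is injective in $\Modu Q$.

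The first step is to check that $i^*$ preserves projectives. Its right adjoint $i_*$ is the restriction functor, which is exact; since a left adjoint of an exact functor sends projective objects to projective objects, $i^*$ preserves projectivity. Recalling from Notation~\ref{notation:adjoints} that $i^*$ is moreover exact (as $\Lambda\otimes A$ is free over $\Lambda$), it follows that any projective resolution $P_\bullet \to M$ in $\Modu Q$ is carried by $i^*$ to a projective resolution $i^*P_\bullet \to i^*M$ in $\Modu(Q\otimes A)$.

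With this in hand, applying $\Hom_{Q\otimes A}(-,N)$ to $i^*P_\bullet$ and using the adjunction isomorphism $\Hom_{Q\otimes A}(i^*(-),N) \cong \Hom_Q(-,i_*N)$ degreewise yields the desired natural isomorphism displayed above. For the final step, the lemma characterising $\mathscr{E}$ gives $i_*N \in \Proj^\ZZ\Lambda$ for $N \in \mathscr{E}$; since $\Lambda$ is self-injective one has $\Proj^\ZZ\Lambda = \Inj^\ZZ\Lambda$, so $i_*N$ is an injective object of $\Modu Q = \Gr\Lambda$. Hence $\Ext^1_Q(M,i_*N) = 0$ for every $M$, and the isomorphism forces $\Ext^1_{Q\otimes A}(i^*M,N)=0$, i.e.\ $i^*M \in {}^{\perp_1}\mathscr{E}$.

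The only delicate point is the preservation of projectives by $i^*$, but as noted this is immediate from the exactness of the restriction functor $i_*$; everything else is a formal consequence of the adjunction together with the identification of graded projectives and graded injectives over a self-injective algebra.
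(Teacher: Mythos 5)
Your argument is correct and is essentially the paper's proof: the paper invokes the derived adjunction $\RHom_{Q\otimes A}(i^*M,N)\cong\RHom_Q(M,i_*N)$ together with the exactness of $i^*$ and the fact that $i_*N$ is projective, hence injective, over $Q$, while you simply unpack that derived adjunction explicitly via a projective resolution. The extra care you take in checking that $i^*$ preserves projectives (via exactness of its right adjoint $i_*$) is exactly what justifies the paper's one-line computation.
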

\begin{proof}
Let $N\in \mathscr{E}$ i.e.\ $i_*N$ is projective, and hence injective, over $Q$. So we have
\[
\RHom_{Q\otimes A}(i^*M, N) \cong \RHom_Q(M, i_*N) = \Hom_Q(M, i_*N)
\]
is concentrated in degree $0$ (recall that $i^*$ is exact). 
\end{proof}

\begin{prop}
The base change functor $i^*\colon \Modu Q \to \Modu (Q\otimes A)$ induces a triangulated coproduct preserving functor $i^*\colon \sGr \Lambda \to \sfD_Q(A)$.
\end{prop}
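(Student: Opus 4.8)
The plan is to realise both the source and the target through their left-perpendicular Frobenius models and to check that $i^*$ descends to the associated stable categories. Recall from the Remark above that for $A = k$ we have ${}^{\perp_1}\mathscr{E}_k = \Gr\Lambda = \Modu Q$, with projective–injective objects $\mathscr{E}_k = \Proj^\ZZ\Lambda$, and $\underline{{}^{\perp_1}\mathscr{E}_k} = \sfD_Q(k) = \sGr\Lambda$; while $\underline{{}^{\perp_1}\mathscr{E}_A} = \sfD_Q(A)$ with projective–injective objects $\mathscr{E}_A$. I would invoke the standard fact that an exact functor between Frobenius categories which preserves projective–injective objects induces a triangulated functor on the associated stable categories. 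Thus it suffices to verify three things: that $i^*$ carries ${}^{\perp_1}\mathscr{E}_k$ into ${}^{\perp_1}\mathscr{E}_A$, that it is exact, and that it sends $\mathscr{E}_k$ into $\mathscr{E}_A$.

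The first point is exactly the preceding lemma: for every $M \in \Modu Q = {}^{\perp_1}\mathscr{E}_k$ we have $i^*M \in {}^{\perp_1}\mathscr{E}_A$. For exactness, recall from Notation~\ref{notation:adjoints} that $i^*$ is exact as a functor of abelian module categories, since $\Lambda\otimes A$ is free over $\Lambda$. The exact structure on each ${}^{\perp_1}\mathscr{E}$ is inherited from the ambient abelian category, its conflations being the short exact sequences all of whose terms lie in ${}^{\perp_1}\mathscr{E}$; since $i^*$ preserves short exact sequences and, by the first point, lands in ${}^{\perp_1}\mathscr{E}_A$, it carries conflations to conflations. For the third point, let $P \in \mathscr{E}_k = \Proj^\ZZ\Lambda$; by the earlier Lemma characterising $\mathscr{E}$ it suffices to show $i_* i^* P \in \Proj^\ZZ\Lambda$. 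But $i_* i^* P$ is the restriction along $\Lambda\to\Lambda\otimes A$ of $P\otimes_k A$, which as a graded $\Lambda$-module is a coproduct of copies of $P$ indexed by a $k$-basis of $A$, hence projective; therefore $i^*P \in \mathscr{E}_A$. This establishes that $i^*$ induces a triangulated functor $\sGr\Lambda \to \sfD_Q(A)$.

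It remains to see that the induced functor preserves coproducts. On module categories $i^*$ is a left adjoint, to $i_*$ by Notation~\ref{notation:adjoints}, and hence preserves all coproducts. The categories ${}^{\perp_1}\mathscr{E}_k$ and ${}^{\perp_1}\mathscr{E}_A$ are closed under coproducts in their ambient abelian categories, so coproducts in the stable categories $\sGr\Lambda$ and $\sfD_Q(A)$ are computed there; consequently the induced functor preserves coproducts as well.

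The only genuinely delicate point is the interaction of $i^*$ with the exact structures: one must confirm that the conflations of the two Frobenius categories really are the ambient short exact sequences with all terms in the respective subcategory, so that exactness of $i^*$ together with the preceding lemma does the work. Everything else—the identification $i_* i^* P \cong P\otimes_k A$ and coproduct preservation via the adjunction $i^*\dashv i_*$—is routine.
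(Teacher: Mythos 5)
Your proof is correct and follows essentially the same route as the paper's: apply the preceding lemma to land in ${}^{\perp_1}\mathscr{E}$, use exactness of $i^*$ and preservation of projective--injectives to descend to the stable categories, and identify $\underline{{}^{\perp_1}\mathscr{E}}$ with $\sfD_Q(A)$. You simply spell out two points the paper leaves implicit, namely the explicit check that $i^*$ sends $\mathscr{E}_k$ into $\mathscr{E}_A$ and the coproduct preservation via the adjunction $i^*\dashv i_*$, both of which are fine.
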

\begin{proof}
By the previous lemma the image of $i^*$ is contained in ${}^{\perp_1}\mathscr{E}$. As always, $i^*$ sends projectives to projectives and, as we have already noted, it is exact. Thus it descends to an exact functor between the stable categories. It only remains to make the identification $\underline{{}^{\perp_1}\mathscr{E}} = \sfD_Q(A)$.  
\end{proof}

\begin{rem}\label{rem:adjoint}
It follows that $i^*\colon \sGr \Lambda \to \sfD_Q(A)$ has a right adjoint. We observe that it is induced by stabilizing $i_*$. Indeed, we can of course restrict $i_*$ to ${}^{\perp_1}\mathscr{E}$ and $i_*$ sends projectives to projectives.
\end{rem}

Let us now understand something about the image of $i^*$. 

\begin{lem}\label{lem:bc2}
The functor $i^*$ preserves compacts and the image of $i^*$ generates $\sfD_Q(A)$.
\end{lem}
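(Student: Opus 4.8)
The plan is to route both assertions through the right adjoint $G$ of $i^*$, which by Remark~\ref{rem:adjoint} is the stabilization of the restriction functor $i_*$: if $X\in\sfD_Q(A)=\underline{{}^{\perp_1}\mathscr{E}}$ is represented by an object $\tilde X\in{}^{\perp_1}\mathscr{E}$, then $G(X)$ is the image of $i_*\tilde X$ in $\sGr\Lambda$, the stable category of $\Gr\Lambda$. The single fact that does all the work is the lemma of this section identifying $\mathscr{E}$ with the modules whose restriction to $Q$ is projective: it says that $G(X)=0$, i.e.\ that $i_*\tilde X$ is projective over $Q$, holds exactly when $\tilde X\in\mathscr{E}$.

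For generation I would show that the right orthogonal of the image vanishes, which, since $\sfD_Q(A)$ is compactly generated, is equivalent to $\im i^*$ generating. So suppose $\Hom_{\sfD_Q(A)}(i^*M,X[n])=0$ for every $M\in\sGr\Lambda$ and every $n\in\ZZ$. By the adjunction this reads $\Hom_{\sGr\Lambda}(M,(GX)[n])=0$; taking $M=GX$ and $n=0$ forces $\id_{GX}=0$, i.e.\ $GX=0$. By the fact above this means $\tilde X\in\mathscr{E}$, and since also $\tilde X\in{}^{\perp_1}\mathscr{E}$ while $\mathscr{E}\subseteq({}^{\perp_1}\mathscr{E})^{\perp_1}$ holds automatically, $\tilde X$ lies in the class of projective–injective objects of the Frobenius category ${}^{\perp_1}\mathscr{E}$, so $X=0$. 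Hence $\im i^*$ generates.

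For preservation of compacts the cleanest route is to prove that $G$ preserves coproducts and then argue formally. Granting that, for $C$ compact in $\sGr\Lambda$ and any family $\{X_\alpha\}$ one has
\[
\Hom(i^*C,\textstyle\coprod_\alpha X_\alpha)\cong\Hom(C,G\textstyle\coprod_\alpha X_\alpha)\cong\Hom(C,\textstyle\coprod_\alpha GX_\alpha)\cong\textstyle\coprod_\alpha\Hom(C,GX_\alpha)\cong\textstyle\coprod_\alpha\Hom(i^*C,X_\alpha),
\]
using the adjunction, coproduct-preservation of $G$, compactness of $C$, and the adjunction again; so $i^*C$ is compact. To see that $G$ preserves coproducts, I would observe that ${}^{\perp_1}\mathscr{E}$ is a left $\Ext^1$-perpendicular class and hence closed under coproducts in $\Modu(Q\otimes A)$, because $\Ext^1(-,E)$ converts coproducts into products. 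Consequently coproducts in $\sfD_Q(A)$ are computed by the underlying module coproducts, and likewise for $\sGr\Lambda$ inside $\Gr\Lambda$. Since $i_*$ is restriction of scalars it commutes with module coproducts, so $G$ commutes with coproducts.

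The step I expect to be the genuine obstacle is the last one: making precise that coproducts in the two stable categories are inherited from the ambient module categories, so that the naive formula $G([\tilde X])=[i_*\tilde X]$ commutes with coproducts on the nose. This is exactly where closure of ${}^{\perp_1}\mathscr{E}$ under coproducts is used, and it may be worth isolating. Everything else is formal manipulation of the adjunction $i^*\dashv G$ together with the description of $\mathscr{E}$.
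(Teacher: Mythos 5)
Your proof is correct, and the generation half follows a genuinely different route from the paper's. For compactness the two arguments coincide: both observe that the right adjoint is the stabilization of $i_*$, which preserves coproducts, and conclude formally; your extra care about why coproducts in the stable categories are computed in the ambient module categories (closure of ${}^{\perp_1}\mathscr{E}$ under coproducts because $\Ext^1(\shortminus,E)$ turns coproducts into products) is exactly the point the paper leaves implicit. For generation the paper simply cites \cite{HJ-tams}: $\sfD_Q(A)$ is compactly generated by $\{i^*S_q \mid q\in Q\}$, and these objects lie in the image of $i^*$. You instead prove that the right adjoint $G$ reflects zero objects: $GX=0$ means $i_*\tilde X$ is graded projective, hence $\tilde X\in\mathscr{E}$ by the first lemma of the section, and together with $\tilde X\in{}^{\perp_1}\mathscr{E}$ and the automatic inclusion $\mathscr{E}\subseteq({}^{\perp_1}\mathscr{E})^{\perp_1}$ this places $\tilde X$ among the projective--injective objects of the Frobenius category ${}^{\perp_1}\mathscr{E}$, so $X=0$ in the stable category. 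This is sound (the identification of the injectives of the extension-closed subcategory ${}^{\perp_1}\mathscr{E}$ with ${}^{\perp_1}\mathscr{E}\cap({}^{\perp_1}\mathscr{E})^{\perp_1}$ is standard), and it buys independence from the explicit list of compact generators in \cite{HJ-tams}, at the cost of leaning on the cotorsion-pair bookkeeping. Two small points to tidy: your conclusion is literally $(\im i^*)^{\perp}=0$, which is the form of generation actually used in Theorem~\ref{thm:tilting}; upgrading it to $\Loc(\im i^*)=\sfD_Q(A)$ uses that the image contains a set of compact objects with this vanishing property (available from the compactness half) rather than compact generation of $\sfD_Q(A)$ alone. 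Neither issue is a gap.
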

\begin{proof}
As observed in Remark~\ref{rem:adjoint} we have an explicit description of the right adjoint of $i^*$ as the functor induced on the stable categories by $i_*$. Since $i_*$ preserves coproducts so does the functor it descends to. Hence $i^*$ preserves compacts. By \cite{HJ-tams} the category $\sfD_Q(A)$ is compactly generated by the set of objects $\{i^*S_q \mid q\in Q\}$ and so the image of $i^*$ generates.
\end{proof}

%--------------------------------------------------------------------------------------------------------------------------------------------------

\subsection{Tilting}

In this section we will make the additional assumption that the ring $\Lambda_0$ has finite global dimension. We can then consider the graded $\Lambda$-module
\[
T = \bigoplus_{i=0}^{\ell-1} \Lambda(i)_{\leq 0},
\]
where $\ell$ is the maximal degree in which $\Lambda$ is non-trivial, which Yamaura \cite{Yamaura} has shown gives a tilting object for $\sGr \Lambda$. This induces a tilting object for the $Q$-shaped derived category of any $k$-algebra $A$.

\begin{notation}
Given chain complexes $X$ and $Y$ with values in some $k$-linear abelian category we will denote the hom-complex, which is a complex of $k$-vector spaces, by $\hom(X,Y)$.
\end{notation}

\begin{thm}\label{thm:tilting}
The object $i^*T\in \sfD_Q(A)$ is a tilting object with endomorphism ring $\Hom_{\sGr \Lambda}(T,T)\otimes_k A$. Thus, setting $\Gamma = \Hom_{\sGr \Lambda}(T,T)$, we have that
\[
\sfD_Q(A) \cong \sfD(\Gamma\otimes_k A)
\]
is the derived category of a ring.
\end{thm}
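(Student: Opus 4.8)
The plan is to show that $i^*T$ satisfies the three defining properties of a tilting object in the compactly generated algebraic triangulated category $\sfD_Q(A)$---compactness, generation, and self-orthogonality---and then to invoke the standard tilting theorem for algebraic triangulated categories (in the style of Keller) to conclude $\sfD_Q(A) \cong \sfD(\End_{\sfD_Q(A)}(i^*T))$, after identifying the endomorphism ring with $\Gamma \otimes_k A$. First, $T$ is compact in $\sGr\Lambda$, being Yamaura's tilting object \cite{Yamaura} (equivalently, a finite direct sum of finitely generated graded modules), and as a tilting object it generates $\sGr\Lambda$ and satisfies $\Hom_{\sGr\Lambda}(T, T[n]) = 0$ for $n \neq 0$. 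Since $i^*$ preserves compacts by Lemma~\ref{lem:bc2}, the object $i^*T$ is compact. For generation, I would observe that $\{X \in \sGr\Lambda \mid i^*X \in \Loc(i^*T)\}$ is a localizing subcategory of $\sGr\Lambda$---because $i^*$ is triangulated and coproduct preserving---and it contains $T$; as $T$ generates $\sGr\Lambda$ this subcategory is everything, so $i^*(\sGr\Lambda) \subseteq \Loc(i^*T)$. Since the image of $i^*$ generates $\sfD_Q(A)$ by Lemma~\ref{lem:bc2}, it follows that $i^*T$ is a compact generator.

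Next I would compute the graded endomorphisms of $i^*T$ using the right adjoint $R$ of $i^*$, which by Remark~\ref{rem:adjoint} is the stabilization of $i_*$. Adjunction together with the triangulatedness of $R$ gives
\[
\Hom_{\sfD_Q(A)}(i^*T, i^*T[n]) \cong \Hom_{\sGr\Lambda}(T, (Ri^*T)[n]).
\]
Now $Ri^*T$ is the image in $\sGr\Lambda$ of $i_*i^*T$, and restricting the base-changed module $i^*T$ back along $\Lambda \to \Lambda \otimes_k A$ simply forgets the $A$-action, so $i_*i^*T \cong T \otimes_k A \cong \coprod_{b \in B} T$, where $B$ is a $k$-basis of $A$. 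Because $T$ is compact, $\Hom_{\sGr\Lambda}(T, -)$ commutes with this coproduct, whence
\[
\Hom_{\sfD_Q(A)}(i^*T, i^*T[n]) \cong \Hom_{\sGr\Lambda}(T, T[n]) \otimes_k A.
\]
Since $T$ is tilting in $\sGr\Lambda$, the right-hand side vanishes for $n \neq 0$ and equals $\Gamma \otimes_k A$ for $n = 0$. This establishes self-orthogonality and computes the endomorphism ring as a $k$-vector space.

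It remains to match the \emph{ring} structure, which I expect to be the main obstacle: the displayed isomorphism, obtained through an adjunction, is a priori only additive and does not manifestly respect composition, since the target ring $\End_{\sfD_Q(A)}(i^*T)$ corresponds under adjunction to $\Hom_{\sGr\Lambda}(T, Ri^*T)$, which carries no evident multiplication. Here I would instead exhibit a ring homomorphism $\Gamma \otimes_k A \to \End_{\sfD_Q(A)}(i^*T)$ directly: functoriality of $i^*$ supplies a $k$-algebra map $\Gamma = \End_{\sGr\Lambda}(T) \to \End_{\sfD_Q(A)}(i^*T)$, while the canonical $A$-action on the base-changed object $i^*T$ supplies a commuting copy of $A$, and together these assemble into the desired ring map, which the previous paragraph shows is bijective. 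Equivalently, one may deduce the ring structure from the multiplicative base-change isomorphism of Lemma~\ref{lem:bc1} combined with the compatibility of $i^*$ with the Frobenius structures, passing the two-sided ideal of morphisms factoring through projective-injectives through the exact functor $-\otimes_k A$. With $i^*T$ thereby identified as a tilting object whose endomorphism ring is $\Gamma \otimes_k A$, the tilting theorem yields the triangulated equivalence $\sfD_Q(A) \cong \sfD(\Gamma \otimes_k A)$.
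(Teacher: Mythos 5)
Your proof is correct, but the central computation is carried out by a genuinely different route than the paper's. Where you agree: both arguments take compactness and generation of $i^*T$ from Lemma~\ref{lem:bc2} together with Yamaura's theorem (your explicit localizing-subcategory argument for generation is a detail the paper leaves implicit, and it is the right one). The divergence is in computing $\Hom_{\sfD_Q(A)}(i^*T,\Sigma^n i^*T)$. The paper works inside the Frobenius model: it takes a degreewise finitely presented complete projective--injective resolution $\widetilde{T}$ of $T$, observes that $i^*\widetilde{T}$ is again a complete projective resolution in ${}^{\perp_1}\mathscr{E}$ because $-\otimes_k A$ is exact, and then identifies $\hom(i^*\widetilde{T},i^*T)\cong\hom(\widetilde{T},T)\otimes_k A$ via Lemma~\ref{lem:bc1}; this has the advantage that the identification is manifestly compatible with composition, so the ring structure comes for free. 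You instead stay at the triangulated level, using the adjunction $i^*\dashv R$ from Remark~\ref{rem:adjoint}, the module-level identification $i_*i^*T\cong\coprod_{b\in B}T$, and compactness of $T$ in $\sGr\Lambda$; this is arguably cleaner and avoids choosing a resolution, at the cost of producing only an additive isomorphism. You correctly flag this and repair it by constructing the ring map $\Gamma\otimes_k A\to\End_{\sfD_Q(A)}(i^*T)$ directly from functoriality of $i^*$ and the commuting left $A$-action on $i^*T=T\otimes_kA$. The one step you leave compressed is the verification that this ring map, composed with your adjunction isomorphism, is the identity on $\Gamma\otimes_kA$ (equivalently, that the adjunction isomorphism sends $i^*(\phi)\cdot a$ to the tuple corresponding to $\phi\otimes a$ under $\Hom(T,\coprod_BT)\cong\Gamma\otimes_kA$); this is a routine unwinding of the unit $T\to i_*i^*T$, $t\mapsto t\otimes 1$, but it is what actually delivers bijectivity of the ring map, so it deserves a sentence. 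With that, both proofs are complete and yield the same conclusion.
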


\begin{proof}
As mentioned, by Yamaura's work, we already know that $T$ is a compact generator for $\sGr \Lambda$. We then learn from Lemma~\ref{lem:bc2} that $i^*T$ is a compact generator for $\sfD_Q(A)$ and so it remains to compute the derived endomorphism ring $\RHom_{\sfD_Q(A)}(i^*T,i^*T)$. We need to show that $\RHom_{\sfD_Q(A)}(i^*T,i^*T)$ has cohomology precisely $\Gamma\otimes A$ in degree $0$.

Let $\widetilde{T}$ be a complete graded projective-injective resolution for $T$ over $\Lambda$, which is moreover chosen to be degree-wise finitely presented. We already know that the complex $\hom(\widetilde{T}, T)$, which computes the cohomology of the derived endomorphism ring in $\sGr \Lambda$, has cohomology $\Hom_{\sGr\Lambda}(T,T)$ concentrated in degree $0$.

Now let us apply the tensor product functor. Because $-\otimes_k A$ is exact the complex $i^*\widetilde{T}$ remains an exact complex of graded projectives over $\Lambda\otimes A$. Thus it is a complete projective resolution for $i^*T$ in the Frobenius category ${}^{\perp_1}\mathscr{E}$. In particular, we may compute the cohomology groups $\Hom_{\sfD_Q(A)}(i^*T, \Sigma^j i^*T) = \Hom_{\underline{{}^{\perp_1}\mathscr{E}}}(i^*T, \Sigma^ji^*T)$ of the derived endomorphism ring of $i^*T$ in $\sfD_Q(A)$ by the cohomology of the hom-complex $\hom(i^*\widetilde{T}, i^*T)$ as follows
\[
\mathrm{H}^*\hom(i^*\widetilde{T}, i^*T) = \mathrm{H}^*\hom(\widetilde{T}, T)\otimes_k A \cong \Hom_{\sGr \Lambda}(T,T)\otimes_k A
\]
where the first identification is an instance of Lemma~\ref{lem:bc1}, using that $i^*T$ is concentrated in degree $0$ and $\widetilde{T}$ has finitely presented terms. This completes the proof of the theorem.
\end{proof}

\begin{rem}
In fact, in the proof above $i^*\widetilde{T}$ remains totally acyclic: for any graded projective $\Lambda\otimes A$-module $P$ we have
\[
\hom(i^*\widetilde{T}, P) \cong \hom(\widetilde{T}, i_*P)
\]
which is acyclic, because $i_*P$ is projective over $\Lambda$.
\end{rem}

%--------------------------------------------------------------------------------------------------------------------------------------------------

%--------------------------------------------------------------------------------------------------------------------------------------------------

\section{Examples}
\label{sec:Examples}

There are many examples where one can apply Theorem~\ref{thm:tilting}. We content ourselves here with presenting a couple of illustrative cases (some of which inspired this work).

%--------------------------------------------------------------------------------------------------------------------------------------------------

\subsection{Mesh categories of type $A$}

The graded ring $\Lambda = \Lambda_n$ in question is the preprojective algebra of type $A_n$, which is graded by giving the original copy of $kA_n$ degree $0$ and the doubled arrows degree $1$. That is to say, we have

\[
\xymatrix{
1 \ar[r]<0.5ex>^-{\alpha_1} \ar@{<-}[r]<-0.5ex>_-{\beta_1} & 2 \ar[r]<0.5ex>^-{\alpha_2} \ar@{<-}[r]<-0.5ex>_-{\beta_2} & \cdots \ar[r]<0.5ex>^-{\alpha_{n-2}} \ar@{<-}[r]<-0.5ex>_-{\beta_{n-2}} & n-1 \ar[r]<0.5ex>^-{\alpha_{n-1}} \ar@{<-}[r]<-0.5ex>_-{\beta_{n-1}} & n
}
\]
with relations $\beta_1\alpha_1$, $\alpha_{n-1}\beta_{n-1}$, and $\beta_{i+1}\alpha_{i+1} - \alpha_{i}\beta_{i}$ for $1\leq i \leq n-2$, where we write paths as compositions of arrows i.e.\ they should be read from right to left, and we assign the $\alpha$'s degree $0$ and the $\beta$'s degree $1$. The category $Q$ then has objects $(i,j)$ for $1\leq i \leq n$ and $j\in \ZZ$ and is precisely the mesh algebra of type $A_n$, i.e.\ it is the full subcategory of indecomposable objects of $\sfD^b(kA_n)$. 

The algebra $\Lambda_n$ satisfies our hypotheses: it is non-negatively graded, self-injective, and $\Lambda_0$ has finite global dimension. Thus, for any $k$-algebra $A$ we have an equivalence
\[
\sfD_{Q_n}(A) \cong \sfD(\Gamma_n\otimes A)
\]
for $\Gamma_n$ some finite dimensional algebra. It turns out, see \cite{Yamaura}*{Section~4.2}, that the algebra $\Gamma_n$ is the Auslander algebra of $kA_{n-1}$ (with linear orientation). Let us spell out explicitly the three smallest examples.

\begin{ex}
The algebra $\Lambda_1$ is just $k$ viewed as a $\ZZ$-graded ring concentrated in degree $0$. Thus $Q_1$ is the $k$-linearization of
\[
\xymatrix{\cdots & i-1 & i & i+1 & \cdots }
\]
i.e.\ we get a category with object set $\ZZ$ and all morphism spaces are trivial except for the endomorphism rings which are all $k$. In this case $\sfD_{Q_1}(A)=0$, for instance taking $A=k$ we have $\sfD_{Q_1}(k) = \sGr k = 0$.
\end{ex}

\begin{ex}
If we take $n=2$ then $\Lambda$ is the graded algebra
\[
\xymatrix{
1 \ar[r]<0.5ex>^-{\alpha} \ar@{<-}[r]<-0.5ex>_-{\beta} & 2 }
\]
with square-zero relations and $\beta$ in degree $1$. Thus $Q_2$ is the $k$-linear category
\[
\xymatrix{
& (2,-1) \ar[dr] & & (2,0) \ar[dr] & & (2,1) \ar[dr] \\
\cdots \ar[ur] & & (1,0) \ar[ur] & & (1,1) \ar[ur] & & \cdots
}
\]
again with all square-zero relations. Reindexing, we see that this is the category defining chain complexes, or equivalently it is the category associated to the graded algebra $k[x]/x^2$ with $x$ of degree $1$. In this case $\Gamma_2 = k$ and we obtain $\sfD_{Q_2}(A) \cong \sfD(A)$. This example is discussed extensively in \cite{HJ-intro}. 
\end{ex}

\begin{ex}
If we take $n=3$ then $Q_3$ is the following category
\[
\xymatrix{
& (3,-1) \ar[dr] & & (3,0) \ar[dr] & & (3,1) \ar[dr] & & \\ 
\cdots \ar[ur] \ar[dr] && (2,0) \ar[ur] \ar[dr] & & (2,1) \ar[ur] \ar[dr] & & \cdots \\
& (1,0) \ar[ur] & & (1,1) \ar[ur] & & (1,2) \ar[ur]
}
\]
with the usual mesh relations (that is, squares anticommute and the top and bottom fringes are square-zero). In this diagram, the upward pointing arrows correspond to generators of $\Lambda$ of degree $0$ and the downward arrows correspond to generators of $\Lambda$ of degree $1$. The algebra $\Gamma_3$ is the Auslander algebra of $kA_2$, i.e.\ it is $kA_3/J^2$ where $J$ is the radical. This is derived equivalent to $kA_3$, its Koszul dual, and so we instead work with $kA_3$ below.

The theorem then tells us that $\sfD_{Q_3}(A) = \sfD(kA_3 \otimes A)$. In particular, $\sfD_Q(A)$ has a number of semiorthogonal decompositions into copies of $\sfD(A)$, i.e.\ the subject of the previous example. This could be interpreted as follows: there are numerous ways in which one can find full subcategories of $Q_3$ which are isomorphic to $Q_2$, such that this choice gives rise to a semiorthogonal decomposition of $\sfD_{Q_3}(A)$ into the copy of $\sfD_{Q_2}(A) = \sfD(A)$ induced by fully faithful inclusion of $Q_2$ and the corresponding localization, which is a copy of $\sfD(kA_2 \otimes A)$. Essentially, $Q_3$ is spliced together from copies of $Q_2$ and the different decompositions of $\sfD_{Q_3}(A)$ reflect how these copies of $Q_2$ interact.

One could, or perhaps even should, compare this to the case of $N$-complexes. Recall that $N$-complexes over $A$ are another name for objects of $\Gr A[x]/x^n$ where $x$ has degree $1$. From, for instance, a $4$-complex we can obtain three different chain complexes by writing $x^4$ as $x\cdot x^3$, $x^2\cdot x^2$, and $x^3\cdot x$ and this gives three different notions of cohomology for such a gadget. It turns out these describe the derived category of $4$-complexes: it is equivalent to $\sfD(kA_3\otimes A)$. Through this lens, $Q_3$ is a different way of encoding the same interactions between three chain complexes.
\end{ex}

In general, the Auslander algebra of $A_{n-1}$ is directed and so $\sfD_{Q_n}(A)$ always admits semiorthogonal decompositions into pieces equivalent to $\sfD(A)$.

%--------------------------------------------------------------------------------------------------------------------------------------------------

\subsection{Exterior algebras}

Let $\Lambda_n$ denote the exterior algebra on $n$-generators with its standard grading, and let $Q_n$ denote the associated $k$-linear category. In this case $Q_n$ has object set $\ZZ$, because $\Lambda_n$ is local, and so coincides with the companion category $\sfC_n$. 

The algebra $\Gamma_n = \Hom_{\sGr \Lambda_n}(T,T)$ is, as in \cite{Yamaura}*{Example~3.16}, given by the (algebra corresponding to the) full subcategory of $Q_n$ on $0,\ldots,n-1$. For any $k$-algebra $A$ we get equivalences
\[
\sfD_Q(A) \cong \sfD(\Gamma_n \otimes A) \cong \sfD(\Gamma_n' \otimes A)
\]
where $\Gamma_n'$ is the Beilinson algebra on $n$ vertices, which is given by the quiver
\[
\xymatrix{
0 \ar@/^1pc/[r]^-{x_0} \ar@/_1pc/[r]_-{x_{n-1}} \ar@{}[r]|{\vdots} & 1 \ar@/^1pc/[r]^-{x_0} \ar@/_1pc/[r]_-{x_{n-1}}  \ar@{}[r]|{\vdots} & 2 \ar@{}[r]|{ \cdots } & n-2 \ar@/^1pc/[r]^-{x_{0}} \ar@/_1pc/[r]_-{x_{n-1}}  \ar@{}[r]|{\vdots} & n-1 }
\]
with commutativity relations $x_ix_j = x_jx_i$ at each pair of consecutive vertices. This is just the Koszul dual of the Koszul algebra $\Gamma_n$ given by swapping anticommutativity relations for commutativity ones. This explains the derived equivalence between $\Gamma_n \otimes A$ and $\Gamma_n' \otimes A$. In particular, if $A$ is commutative we obtain an equivalence $\sfD_Q(A) \cong \sfD(\PP^{n-1}_A)$ via the standard tilting object for $\PP^{n-1}_A$ coming from the twisting sheaves (Beilinson's original derived equivalence \cite{Beilinson} over a field generalizes directly to an arbitrary commutative base ring).

%--------------------------------------------------------------------------------------------------------------------------------------------------

%--------------------------------------------------------------------------------------------------------------------------------------------------


\begin{thebibliography}{19}

\bibitem{Beilinson}Beĭlinson, A.\ Coherent sheaves on $\PP^n$ and problems in linear algebra. {\em Funktsional.\ Anal.\ I Prilozhen.\ } \textbf{12}, 68-69 (1978), https://doi.org/10.1007/BF01681436

\bibitem{GG}Gordon, R.\ \& Green, E.\ Graded Artin algebras. {\em J.\ Algebra}. \textbf{76}, 111-137 (1982), https://doi.org/10.1016/0021-8693(82)90240-X

\bibitem{HJ-intro}Holm, H.\ \& J\o{}rgensen, P.\ A brief introduction to the $Q$-shaped derived category. {\em Triangulated Categories In Representation Theory And Beyond} pp. 141-167 (Springer, Cham, 2024), https://doi.org/10.1007/978-3-031-57789-5

\bibitem{HJ-jlms}Holm, H.\ \& J\o{}rgensen, P.\ The $Q$-shaped derived category of a ring. {\em J.\ Lond.\ Math.\ Soc.\ (2)}. \textbf{106}, 3263-3316 (2022), https://doi.org/10.1112/jlms.12662

\bibitem{HJ-tams}Holm, H.\ \& J\o{}rgensen, P.\ The $Q$-shaped derived category of a ring - compact and perfect objects. {\em Trans.\ Am.\ Math.\ Soc.\ }. \textbf{377} 3095-3128 (2024), https://doi.org/10.1090/tran/8979 

\bibitem{IKM}  Iyama, O., Kato, K.\ \& Miyachi, J.\ Derived categories of $N$-complexes, {\em J.\ Lond.\ Math.\ Soc.\ (2)}. \textbf{96}, 687-716 (2017), https://doi.org/10.1112/jlms.12084

\bibitem{IM}  Iyama, O.\ \& Minamoto, H.\  On a generalization of complexes and their derived categories.  {\em 47th Ring and Representation Theory Symposium, Osaka City University} pp. 63-68 (2014).

\bibitem{Jasso}  Jasso, G.\  $Q$-shaped derived categories as derived categories of differential graded bimodules.  Preprint (2025), https://arxiv.org/abs/2501.08255v1

\bibitem{Kajiura-Saito-Takahashi} Kajiura, H., Saito, K. \& Takahashi, A.\  Matrix factorizations and representations of quivers II: Type ADE case.  {\em Adv.\ Math.} {\bf 211} pp.\ 327-362.  https://doi.org/10.1016/j.aim.2006.08.005

\bibitem{NV}Năstăsescu, C.\ \& Van Oystaeyen, F.\ {\em Graded and filtered rings and modules} (Springer Berlin, Heidelberg, 1979), https://doi.org/10.1007/BFb0067331

\bibitem{Street}Street, R.\ Ideals, radicals, and structure of additive categories. {\em Appl.\ Categ.\ Structures}. \textbf{3}, 139-149 (1995), https://doi.org/10.1007/BF00877633

\bibitem{Yamaura}Yamaura, K.\ Realizing stable categories as derived categories. {\em Adv.\ Math.\ } \textbf{248} pp. 784-819 (2013), https://doi.org/10.1016/j.aim.2013.08.017

\end{thebibliography}
\end{document}